\newcommand{\ra}{\rightarrow}		
\newcommand{\by}[1]{\stackrel{#1}{\ra}}
\newcommand{\ol}{\overline}		
\newcommand{\iso}{\by \sim}
\newtheorem{theorem}{Theorem}[section]
\newtheorem{proposition}[theorem]{Proposition}
\newtheorem{corollary}[theorem]{Corollary}
	\newcommand{\p}{\mbox{$\mathfrak p$}}
\newcommand{\ot}{\mbox{\,$\otimes$\,}}	\newcommand{\op}{\mbox{$\oplus$}}
\newcommand{\Aut}{\mbox{\rm Aut\,}}
\newcommand{\Um}{\mbox{\rm Um}}		\newcommand{\SL}{\mbox{\rm SL}}
\newcommand{\GL}{\mbox{\rm GL}}
\begin{document}

\begin{center}
{\large \bf Projective modules over discrete Hodge algebras}\\\vspace{.2in} 
        {\large Manoj  Kumar Keshari}\\
\vspace{.1in}
{\small
Department of Mathematics, IIT Mumbai, Mumbai - 400076, India;\;
        keshari@math.iitb.ac.in}
\end{center}

\section{Introduction}
All the rings are assumed to be commutative Noetherian and all
the modules are finitely generated.

Let $A$ be a ring. In (\cite{Vorst}, Theorem 1.1), Vorst proved that
if all projective modules over polynomial extensions of $A$ are
extended from $A$, then all projective modules over discrete Hodge
$A$-algebras are extended from $A$ (An $A$-algebra $R$ is a {\it
discrete Hodge $A$-algebra} if $R=A[X_0,\ldots,X_n]/I$, where $I$ is
an ideal generated by monomials). In this note, we extend the above
result of Vorst by proving the following result.

\begin{theorem}\label{k1}
Let $A$ be a ring and $r>0$ be an integer. Assume that all projective
modules of rank $r$ over polynomial extensions of $A$ are extended
from $A$. Then all projective modules of rank $r$ over discrete Hodge
$A$-algebras are extended from $A$.
\end{theorem}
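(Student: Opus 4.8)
\proof
The plan is to follow Vorst's proof of (\cite{Vorst}, Theorem~1.1) --- an induction on the number of variables --- while keeping track of ranks throughout. Write a discrete Hodge $A$-algebra as $R=A[X_0,\dots,X_n]/I$ with $I$ a monomial ideal, $I\ne(1)$, and prove by induction on $n$ the assertion $(\ast_n)$: for every $m\ge 0$, every rank-$r$ projective module over $R[Y_1,\dots,Y_m]$ is extended from $A$. Each $R[Y_1,\dots,Y_m]$ is again a discrete Hodge $A$-algebra, but one whose monomial relations involve only $X_0,\dots,X_n$, so that the $Y_j$ are free variables; Theorem~\ref{k1} is the union of the $(\ast_n)$, and this extra room is what makes the induction run. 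If $I=0$ (in particular in the base case) then $R[Y_1,\dots,Y_m]$ is a polynomial extension of $A$ and we are done by hypothesis. If some variable is nilpotent in $R$, say $X_n^N\in I$, then $(X_n)$ is a nilpotent ideal of $R[Y_1,\dots,Y_m]$, the quotient by it is $(A[X_0,\dots,X_{n-1}]/(I+(X_n)))[Y_1,\dots,Y_m]$, which is covered by $(\ast_{n-1})$; since projective modules lift uniquely up to isomorphism along a nilpotent ideal, every rank-$r$ projective over $R[Y_1,\dots,Y_m]$ is then extended from $A$.

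For the inductive step, assume $X_n$ occurs in the minimal generators of $I$ and is not nilpotent in $R$; set $B=A[X_0,\dots,X_{n-1}]$ and expand $I=\bigoplus_{j\ge 0}\ma_j X_n^{j}$ with monomial ideals $\ma_0\subseteq\ma_1\subseteq\cdots$ of $B$ stabilising, say $\ma_j=\ma_N=:\ma$ for $j\ge N$, where $\ma\subsetneq B$ because $X_n$ is not nilpotent in $R$. Put $I_1:=\ma[X_n]$ and $I_2:=I+(X_n^N)$, both monomial ideals; a short computation gives $I=I_1\cap I_2$ and $I_1+I_2=\ma[X_n]+(X_n^N)$. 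Applying $S/(I_1\cap I_2)\cong (S/I_1)\times_{S/(I_1+I_2)}(S/I_2)$ with $S=A[X_0,\dots,X_n]$, and tensoring over $A$ with the flat algebra $A[Y_1,\dots,Y_m]$, we obtain a Milnor square (writing $[Y]:=[Y_1,\dots,Y_m]$)
\[
\begin{array}{ccc}
R[Y] & \lra & R_1[Y]:=(B/\ma)[X_n][Y]\\
\downarrow & & \downarrow\\
R_2[Y]:=(R/X_n^NR)[Y] & \lra & R_{12}[Y]:=\bigl((B/\ma)[X_n]/(X_n^N)\bigr)[Y]
\end{array}
\]
in which the right-hand vertical map is surjective. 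Here $B/\ma$ and $B/\ma_0=R/X_nR$ are discrete Hodge $A$-algebras in the $n$ variables $X_0,\dots,X_{n-1}$.

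Let $P$ be a rank-$r$ projective over $R[Y]$; by Milnor patching it is given by rank-$r$ projectives $P_i$ over $R_i[Y]$ ($i=1,2$) and a gluing isomorphism $\gt$ over $R_{12}[Y]$. Now $R_1[Y]=(B/\ma)[X_n,Y_1,\dots,Y_m]$ is a polynomial extension of the $n$-variable discrete Hodge algebra $B/\ma$, so $(\ast_{n-1})$ gives $P_1\cong R_1[Y]\otimes_A Q_1$ with $Q_1$ a rank-$r$ projective $A$-module; and $X_n$ is nilpotent in $R_2[Y]$, whose reduction $R_2[Y]/X_nR_2[Y]=(B/\ma_0)[Y_1,\dots,Y_m]$ is covered by $(\ast_{n-1})$, so unique lifting along $(X_n)$ gives $P_2\cong R_2[Y]\otimes_A Q_2$ with $Q_2$ a rank-$r$ projective $A$-module. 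Restricting $\gt$ along the augmentation $R_{12}[Y]\ra A$ (legitimate since $\ma\ne(1)$) gives $Q_1\cong Q_2$; writing $Q_0$ for this module and absorbing the isomorphism, we may assume $P_i\cong R_i[Y]\otimes_A Q_0$ and $\gt\in\Aut_{R_{12}[Y]}(R_{12}[Y]\otimes_A Q_0)$.

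It remains to prove $P\cong R[Y]\otimes_A Q_0$, which by Milnor patching amounts to writing $\gt$ as a product of the images in $\Aut_{R_{12}[Y]}(R_{12}[Y]\otimes_A Q_0)$ of automorphisms of $R_1[Y]\otimes_A Q_0$ and of $R_2[Y]\otimes_A Q_0$; this is the only genuine obstacle. Because $X_n$ is nilpotent in $R_{12}$, reduction modulo $(X_n)$ is a split surjection $\Aut_{R_{12}[Y]}(R_{12}[Y]\otimes_A Q_0)\surj\Aut_C(C\otimes_A Q_0)$, where $C:=R_{12}[Y]/(X_n)=(B/\ma)[Y]$, with kernel $U:=\mathrm{id}+(X_n)\bigl(R_{12}[Y]\otimes_A\End_A Q_0\bigr)$ (whose elements are all unipotent, hence invertible) and with splitting image $H$ the subgroup of automorphisms induced from $\Aut_C(C\otimes_A Q_0)$. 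Such induced automorphisms extend to $R_1[Y]=C[X_n]$, so $H$ lies in the image of $\Aut_{R_1[Y]}$; and since $R_2[Y]\surj R_{12}[Y]$ and $(X_n)$ is nilpotent in $R_2$, each element of $U$ is the image of some $\mathrm{id}+\nu\in\Aut_{R_2[Y]}$ with $\nu\in(X_n)\bigl(R_2[Y]\otimes_A\End_A Q_0\bigr)$, so $U$ lies in the image of $\Aut_{R_2[Y]}$. As $UH$ is the whole group, $\gt$ can be trivialised and $P\cong R[Y]\otimes_A Q_0$, closing the induction. The only respect in which this elaborates rather than copies Vorst's argument is the observation, used throughout, that base change along $R\ra R_i$, reduction modulo $X_n$, Milnor patching, and unique lifting along a nilpotent ideal all send rank-$r$ projectives to rank-$r$ projectives, so that only the rank-$r$ hypothesis on $A$ is ever invoked.
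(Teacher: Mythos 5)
Your proof is correct, and while it shares the paper's overall strategy --- induction on the number of $X$-variables combined with Milnor patching over a cartesian square of discrete Hodge algebras --- it differs genuinely at both places where the paper leans on the literature. The paper reduces to square-free monomial ideals, uses Vorst's simplicial cartesian square (Proposition \ref{v1}), in which $j_2\colon A(\Sigma_2)[X_i]\ra A(\Sigma_2)$ is a split surjection, and then finishes by citing Wiemers' Proposition \ref{w1}(ii); the hypothesis there, that $\Aut_{A_2}(P_2)\ra\Aut_{A_0}(P_0)$ be surjective, is immediate because $P_2$ is extended along that split surjection. You instead keep an arbitrary monomial ideal, decompose it as $I=\ma[X_n]\cap(I+(X_n^N))$ with respect to one variable, and prove the patching step by hand --- and this is genuinely necessary for your square, since neither restriction map of automorphism groups to $\Aut_{R_{12}[Y]}(R_{12}[Y]\ot_A Q_0)$ is surjective on its own ($X_n$ is not nilpotent in $R_1[Y]$, and the kernel of $R_2[Y]\ra R_{12}[Y]$ is not nilpotent either), so Proposition \ref{w1}(ii) would not apply directly; your two-sided factorization $\gt=uh$, with $u$ unipotent and liftable to $\Aut(P_2)$ because $X_n$ is nilpotent in $R_2$, and $h$ induced from $(B/\ma)[Y]$ and hence liftable to $\Aut(P_1)$, supplies exactly the missing content. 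What your route buys: it is self-contained apart from Milnor patching, it avoids the reduction to square-free ideals, and the strengthened induction hypothesis $(\ast_n)$ with extra free variables $Y_1,\ldots,Y_m$ makes transparent that only rank-$r$ projectives over polynomial extensions of $A$ are ever invoked, which is the whole point of the refinement of Vorst's theorem. What the paper's route buys is brevity. Two harmless loose ends to tidy: in the base case $n=0$ with $I=(X_0^N)$ the quotient is $A[Y]$ itself, so ``$(\ast_{-1})$'' should be read as the hypothesis on $A$; and the inductive step should state the renaming of variables ensuring that $X_n$ actually occurs in a minimal generator of $I$ (possible once $I\neq 0$).
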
 

We note that Lindel gave another proof of Vorst's result (\cite{lin},
Theorem 1.5) and a proof of (\ref{k1}) is implicit in Lindel's
proof. But the idea of our proof is different from Lindel's and it also
gives other results which we describe below.

Let $A$ be a ring of dimension $d$ and let $r>d/2$. Assume that $A$ is
of finite characteristic prime to $r!$. In
(\cite{Roitman}, Theorem 5), Roitman proved that if $P$ is a
projective module of rank $r$ over $R=A[X_1,\ldots,X_n]$ such that
$P\op R$ is extended from $A$, then $P$ is extended from $A$. In
particular, if $A$ is a local ring of dimension $d$, characteristic of
$A$ is positive and prime to $d!$, then all stably free modules of
rank $> d/2$ over polynomial extensions of $A$ are free. 

We will prove the following analogue of Roitman's result for discrete
Hodge $A$-algebras.

\begin{theorem}\label{k2}
Let $A$ be a ring of dimension $d$. Assume $A$ is of finite
characteristic prime to $r!$. Let $R$ be a discrete Hodge $A$-algebra
and let $P$ be a projective $R$-module of rank $r> d/2$. If $P\op R$
is extended from $A$, then $P$ is extended from $A$.
\end{theorem}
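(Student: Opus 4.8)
The plan is to rerun the proof of Theorem~\ref{k1}, carrying along the extra datum ``$P\op R$ is extended from $A$'' and feeding Roitman's Theorem~5 wherever that proof invoked the hypothesis on polynomial extensions. The conditions $\dim A=d$, $r>d/2$, and ``finite characteristic prime to $r!$'' all concern $A$ alone, and $A$ is never altered in the argument; so the only new thing to check is that each reduction step of the proof of Theorem~\ref{k1} sends a module satisfying ``$-\,\op(\text{free})$ is extended from $A$'' to a module with the same property over the smaller ring. If so, the hypothesis survives to the polynomial-ring leaves of the reduction, where (\cite{Roitman}, Theorem~5) applies verbatim --- its hypotheses being exactly $\dim A=d$, $r>d/2$, characteristic prime to $r!$, and $P\op R$ extended from $A$ --- and yields ``extended from $A$.''

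Concretely, I would induct on the number of variables in a presentation $R=A[X_0,\dots,X_n]/I$ with $I$ generated by monomials. If $I=0$ then $R$ is a polynomial extension of $A$ and the statement is (\cite{Roitman}, Theorem~5). If some variable is nilpotent in $R$, say $X_i^s\in I$ with $s\ge1$, then $X_iR$ is a nilpotent ideal, so finitely generated projective $R$-modules lift uniquely along $R\surj R/X_iR$, and $R/X_iR$ is a discrete Hodge $A$-algebra in one fewer variable; as $(P/X_iP)\op(R/X_iR)$ is again extended from $A$, the inductive hypothesis gives that $P/X_iP$ is extended from $A$, hence so is $P$ by unique lifting. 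Otherwise no variable is nilpotent in $R$; then, fixing a variable $X_0$ occurring in a minimal generator of $I$, one has $R/X_0R$ a discrete Hodge $A$-algebra in $n$ variables, $R_{X_0}\cong B[X_0,X_0^{-1}]$ with $B=A[X_1,\dots,X_n]/(\text{the }X_0\text{-free parts of the minimal generators of }I)$ again a discrete Hodge $A$-algebra in $n$ variables, and $\Spec R$ the union of the closed set $V(X_0)$ and the open set $D(X_0)$. The inductive hypothesis controls $P$ over $R/X_0R$ and over $B$, and it remains to transport this --- together with the behaviour of $P$ over the Laurent extension $R_{X_0}$ --- back to $R$, which is the patching performed in the proof of Theorem~\ref{k1}, repeated with $P\op R$ in tow. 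The two operations used here, inverting a variable and killing a variable, are manifestly compatible with ``$-\,\op(\text{free})$ extended from $A$,'' so the stable hypothesis is preserved at every stage.

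The step I expect to be the main obstacle is this last patching: deducing that $P$ is extended from $A$ over $R$ from the facts that it is so over $R/X_0R$ and over $R_{X_0}$. Reassembling over $\Spec R$ from the closed part $V(X_0)$ and the open part $D(X_0)$ forces one to normalise certain transition data, which is a ``stably trivial $\Rightarrow$ trivial'' problem for matrices over a smaller discrete Hodge algebra --- an unstable $K_1$-type statement, false in general, that is settled on the polynomial pieces by the Suslin-type arguments behind Roitman's theorem. This is precisely where ``characteristic prime to $r!$'' and ``$r>d/2$'' are consumed, and it is why the induction must carry the stably extended module $P\op R$ rather than $P$ alone: (\cite{Roitman}, Theorem~5) is a cancellation statement whose very input is ``$P\op(\text{free})$ extended from $A$.'' Granting this patching, the remaining verifications are routine bookkeeping and the induction closes.
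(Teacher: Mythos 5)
Your high-level plan --- rerun the proof of Theorem~\ref{k1} carrying the stable hypothesis along and invoke Roitman's theorem on the polynomial pieces --- is exactly what the paper does. But the skeleton you substitute for that proof is not the paper's, and it breaks at precisely the point you flag. The decomposition $\Spec R = V(X_0)\cup D(X_0)$ does not produce a cartesian square of rings: there is no common quotient over which $R$ is the fibre product of $R/X_0R$ and $R_{X_0}$, so there is no Milnor patching of projective modules from those two restrictions, and the ``transition data'' problem you defer to ``Suslin-type arguments'' is not a step anyone knows how to carry out in this generality --- you yourself call it an unstable $K_1$-type statement that is false in general. A proof whose central step is ``granting this patching'' is not a proof. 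Moreover $R_{X_0}\cong B[X_0,X_0^{-1}]$ is a Laurent extension, hence not a discrete Hodge $A$-algebra, so neither your induction hypothesis nor Roitman's theorem (which is about polynomial extensions) applies to it; the hypothesis ``$P\op R$ extended from $A$'' is not ``manifestly'' preserved under inverting a variable.

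The paper avoids all of this by using Vorst's Lemma~3.4 (Proposition~\ref{v1}): writing $R=A(\Sigma)$ for a simplicial subcomplex $\Sigma$ of $\Delta_n$, one gets a genuine cartesian square of \emph{surjections} $A(\Sigma)\ra A(\Sigma_1)$, $A(\Sigma)\ra A(C(\Sigma_2))=A(\Sigma_2)[X_i]$ over $A(\Sigma_2)$, with $\Sigma_1,\Sigma_2$ supported on fewer vertices. Since the maps $R\ra A_i$ are surjective $A$-algebra maps, $P_i\op A_i$ stays extended from $A$, the induction hypothesis makes each $P_i$ extended from $A$, and then the $\Aut$-surjectivity needed for Wiemers' Proposition~\ref{w1}(ii) is automatic because $A(\Sigma_2)[X_i]\ra A(\Sigma_2)$ is a split surjection and $P_2$ is extended along the splitting --- no unstable $K_1$ obstruction ever arises. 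The hypotheses $\dim A=d$, $r>d/2$, and characteristic prime to $r!$ are consumed only where Roitman's theorem is applied to the polynomial leaves of the induction, not in the patching. You need to replace your open/closed decomposition with this square of surjections; as written, the argument has a genuine gap at its core.
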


As a corollary to the above result, if $A$ is a local ring of
dimension $d$, characteristic of $A$ is finite and prime to $d!$,
then all stably free modules of rank $> d/2$ over discrete Hodge
$A$-algebras are free. 
\medskip

Now, we will describe our last result.  Let $A$ be a ring of dimension
$d$ and let $R=A[X_1,\ldots,X_n]$.  In (\cite{Wiemers}, Section 4),
Wiemers asked the following question: Is the natural map $\Um_r(R) \ra
\Um_r(R/(X_1X_2\ldots X_k ))$ surjective for all $r$ and $1\leq k\leq
n$?

Wiemers (\cite{Wiemers}, Proposition 4.1) answered the above question
in affirmative when $r\geq d+2$ or $r=d+1$ and $1/d!\in A$.  We will
prove the following result which gives a partial answer to Wiemers
question in affirmative.

\begin{theorem}\label{k3}
Let $A$ be a ring of dimension $d$. Assume characteristic of $A$ is
positive and prime to $d!$. Let $R=A[X_1,\ldots,X_n]$ and let
$I\subset J$ be two ideals of $R$ generated by square free
monomials. Then the map $\Um_r(R/I) \ra \Um_r(R/J)$ is
surjective for $r \geq \frac{d}{2}+2$.
\end{theorem}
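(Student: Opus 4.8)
The plan is to reduce to the case $J = R/(X_1 X_2 \cdots X_n)$ and $I$ generated by a subset of the variables, and then to induct on $n$ by peeling off one variable at a time, using a Mayer--Vietoris / patching argument over the "coordinate hyperplanes." First I would set up the induction: write $\bar R = R/I$ and $\bar R/\bar J$ for the target, where $I$ and $J$ are generated by square-free monomials, so $\bar R$ is itself a discrete Hodge $A$-algebra. It suffices to treat the case where $J = I + (m)$ for a single square-free monomial $m = X_{i_1}\cdots X_{i_s}$ not in $I$ (a general inclusion $I \subset J$ factors into finitely many such one-step extensions). Given a unimodular row $v \in \Um_r(\bar R/\bar J)$, I want to lift it to $\Um_r(\bar R)$.

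The key step is a decomposition. Over $\bar R$ one has the two ideals $\mathfrak a = m\bar R$ and $\mathfrak b = (\text{the intersection of the minimal primes of } \bar R \text{ containing } m)$, chosen so that $\bar R/(\mathfrak a \cap \mathfrak b)$ and the fibre product $\bar R/\mathfrak a \times_{\bar R/(\mathfrak a + \mathfrak b)} \bar R/\mathfrak b$ behave well; more concretely, the familiar trick is that $\bar R/(X_{i_1}\cdots X_{i_s})$ is the subring of $\prod_{j} \bar R/(X_{i_j})$ of tuples agreeing on overlaps, so a unimodular row over the quotient can be assembled from compatible unimodular rows over the pieces $\bar R/(X_{i_j})$, each of which has one fewer "active" variable. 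The rings $\bar R/(X_{i_j})$ are again polynomial extensions of a discrete Hodge $A$-algebra of the same dimension $d$ (setting a variable to $0$ does not raise dimension), so by induction on the number of monomial generators we get unimodular lifts there, and the patching requires us to modify these lifts by elementary matrices so that they agree on the intersections $\bar R/(X_{i_j}, X_{i_k})$. This is exactly where the bound $r \geq d/2 + 2$ enters: to correct two unimodular rows that are congruent modulo an ideal so that they become equal, one needs the relevant row to be \emph{completable} or at least to transfer along elementary operations, and the transitivity of the elementary action on unimodular rows of length $r$ over a ring of dimension $\leq d$ (here $d+1$ after adjoining the "gluing" variable, but the square-free hypothesis keeps us at affine dimension $d$ over $A$) holds precisely in this range — this is the $\text{char}(A)$ prime to $d!$ / $r > d/2$ input, the same numerology underlying Theorem \ref{k2} and Roitman's theorem.

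Concretely, the induction base is $n=0$ or $I=J$, which is trivial. For the inductive step, I would (i) use the fibre-square description of $\bar R/\bar J$ over the coordinate hyperplanes to write $v$ as a compatible family $(v_j)$ with $v_j \in \Um_r$ of a smaller discrete Hodge algebra modulo its own square-free ideal; (ii) lift each $v_j$ by the inductive hypothesis; (iii) on each overlap, the two lifts differ by a unimodular row issue that is killed after an elementary transformation because $r \geq d/2 + 2$ guarantees $E_r$ acts transitively on $\Um_r$ of the overlap ring and that unimodular rows extend across the gluing; (iv) patch the corrected lifts to get $\tilde v \in \Um_r(\bar R)$ reducing to $v$. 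The main obstacle is step (iii): making the elementary corrections on the several pairwise overlaps \emph{simultaneously compatible} (a cocycle condition on triple overlaps), which is the technical heart and is handled by the standard Quillen-style local--global / Vorst-type patching, now available in the rank range $r \geq d/2 + 2$ by the characteristic hypothesis; everything else is bookkeeping with monomial ideals and dimension counts.
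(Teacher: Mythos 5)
Your proposal has a genuine gap at its core, in step (iii). You need the elementary group $E_r$ to act transitively on $\Um_r$ of the overlap rings (rings of dimension up to $d$) in the range $r\geq \frac{d}{2}+2$, and you attribute this to the positive-characteristic hypothesis. No such transitivity theorem is available: the known ranges for transitivity of the elementary (or general linear) action on $\Um_r$ of a $d$-dimensional ring are $r\geq d+2$ (Bass) and $r=d+1$ when $1/d!\in A$ (Suslin, Rao) --- exactly the ranges in which Wiemers already answered his own question (\cite{Wiemers}, Proposition 4.1). What the hypothesis ``characteristic positive and prime to $d!$'' buys in the range $r>d/2$ is a Roitman-type statement --- a projective module over a polynomial (or, by Theorem \ref{k2}, discrete Hodge) extension that is \emph{stably} extended is extended --- which is a statement about the extension rings, not a transitivity statement over the base. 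Conflating these two is where the argument breaks. The cocycle problem on triple overlaps that you flag and defer to ``standard Quillen-style patching'' is a second unresolved obstruction: unimodular rows, unlike projective modules, do not come with a Quillen patching theorem, and making the pairwise elementary corrections simultaneously compatible is precisely the hard part you would have to supply.

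The paper's proof avoids patching rows altogether and is much shorter. It first reduces, as you do, to lifting along $\Um_r(R)\ra \Um_r(R/J)$. Given $v\in \Um_r(R/J)$, let $P=\ker\bigl(v:(R/J)^r\ra R/J\bigr)$, a projective $R/J$-module of rank $r-1>d/2$ with $P\op R/J$ free, hence extended from $A$. Theorem \ref{k2} then gives that $P$ itself is extended from $A$, and comparing the resulting exact sequence with the one for the constant row $v(0)\ot R$ produces $\sigma\in\GL_r(R/J)$ with $v\sigma=v(0)\ot R$. Finally $\sigma$ lifts to $\Delta\in\GL_r(R)$ by Wiemers' theorem (\ref{w3}) that $\GL_r(R)\ra\GL_r(R/J)$ is surjective for any square-free monomial ideal $J$ (no rank or dimension restriction), and $v(0)\Delta^{-1}$ is the desired lift of $v$. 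So the engine is Theorem \ref{k2} applied to the kernel of the row together with $\GL_r$-surjectivity, not elementary transitivity plus Mayer--Vietoris; if you want to pursue your route you must either prove the transitivity statement you are invoking or replace it by this kernel argument.
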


\section{Preliminaries}

Given a cartesian diagram of rings $$\xymatrix{ A\ar [r] \ar [d] & A_1
\ar [d]_{j_1} \\ A_2 \ar [r]_{j_2} & A_0 }$$ where $j_2$ is a surjective
map. If $P$ is a projective $A$-module, then the above diagram induces
a cartesian diagram (\cite{Milnor}, Section 2)
$$\xymatrix{ P\ar [r] \ar [d] & P_1
\ar [d] \\ P_2 \ar [r] & P_0 } $$ 
where $P_i=P\ot A_i$ for $i=0,1,2$.
\medskip

We begin by stating the following two results of A. Wiemers
(\cite{Wiemers}, Proposition 2.1 and Theorem 2.3) respectively.

\begin{proposition}\label{w1}
Given a cartesian square of rings with $j_2$ surjective and a
projective $A$-module $P$. Then

$(i)$ If $\Aut_{A_2}(P_2) \ra \Aut_{A_0}(P_0)$ is surjective, then so is 
$\Aut_A(P) \ra \Aut_{A_1}(P_1)$.

$(ii)$ If $\Aut_{A_2}(P_2) \ra \Aut_{A_0}(P_0)$ is surjective and
$Q\ot_A A_i \iso P_i$, $i=1,2$ for another projective $A$-module
$Q$, then $P\iso Q$. In particular, if $P_1$ and $P_2$ have the
cancellation property, then so does $P$.

$(iii)$ Let, in addition, $j_1$ be surjective. If $\Um(P_2) \ra
\Um(P_0)$ is surjective, then so is $\Um(P) \ra \Um(P_1)$.
\end{proposition}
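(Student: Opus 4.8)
The plan is to exploit the patching structure attached to the cartesian square. Since $j_2$ is surjective, Milnor patching identifies the category of projective $A$-modules with the category of triples $(M_1,M_2,\gt)$, where $M_i$ is a projective $A_i$-module and $\gt\colon M_1\ot_{A_1}A_0\iso M_2\ot_{A_2}A_0$, a morphism of triples being a pair of $A_i$-linear maps agreeing over $A_0$; in particular, for projective $A$-modules $M,N$,
$$\Hom_A(M,N)\ \iso\ \Hom_{A_1}(M_1,N_1)\times_{\Hom_{A_0}(M_0,N_0)}\Hom_{A_2}(M_2,N_2).$$
Taking $M=N=P$ and noting that a pair $(\gf_1,\gf_2)$ of endomorphisms agreeing over $A_0$ is invertible iff each $\gf_i$ is (then $(\gf_1^{-1},\gf_2^{-1})$ again agrees over $A_0$ and is a two-sided inverse), we obtain $\Aut_A(P)\iso\Aut_{A_1}(P_1)\times_{\Aut_{A_0}(P_0)}\Aut_{A_2}(P_2)$. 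Statement $(i)$ is then the elementary fact that in a fibre product of groups $G_1\times_{G_0}G_2$ with $G_2\surj G_0$ the projection onto $G_1$ is surjective: given $\gf_1\in\Aut_{A_1}(P_1)$, lift $\gf_1\ot A_0\in\Aut_{A_0}(P_0)$ to some $\gf_2\in\Aut_{A_2}(P_2)$ by hypothesis, and $(\gf_1,\gf_2)\in\Aut_A(P)$ maps to $\gf_1$.

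For $(ii)$ I would choose isomorphisms $\ga_i\colon P_i\iso Q_i$ and transport the gluing datum of $Q$ through them: this exhibits $Q$ as the triple $(P_1,P_2,\gw)$ for some $\gw\in\Aut_{A_0}(P_0)$, while $P$ is the triple $(P_1,P_2,\mathrm{id})$. By the surjectivity hypothesis, $\gw=\gf_2\ot A_0$ for some $\gf_2\in\Aut_{A_2}(P_2)$, so the pair $(\mathrm{id}_{P_1},\gf_2)$ is an isomorphism of triples $(P_1,P_2,\mathrm{id})\iso(P_1,P_2,\gw)$; therefore $P\iso Q$. For the cancellation clause: from $P\op A\iso Q\op A$, tensoring down gives $P_i\op A_i\iso Q_i\op A_i$ for $i=1,2$; cancellation over $A_1$ and $A_2$ yields $P_i\iso Q_i$, and the case just proved gives $P\iso Q$.

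For $(iii)$, the additional hypothesis that $j_1$ is also surjective makes both structure maps $A\surj A_1$ and $A\surj A_2$ surjective; denoting their kernels by $\ma_1,\ma_2$, one has $\ma_1\ma_2\subseteq\ma_1\cap\ma_2=0$ since $A$ embeds into $A_1\times A_2$, so every maximal ideal of $A$ contains $\ma_1$ or $\ma_2$ and hence is induced from $A_1$ or from $A_2$. Combined with the local criterion that $p\in M$ is unimodular iff $p\notin\mm M$ for every maximal ideal $\mm$, this gives, inside $P=P_1\times_{P_0}P_2$, the identity $\Um(P)=\Um(P_1)\times_{\Um(P_0)}\Um(P_2)$. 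So, given $p_1\in\Um(P_1)$, its image in $P_0$ is again unimodular (a splitting of $p_1$ restricts to one over $A_0$); lift that image to some $p_2\in\Um(P_2)$ by hypothesis; then $(p_1,p_2)$ is a unimodular element of $P$ lifting $p_1$.

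The one genuinely technical point is the Milnor patching dictionary invoked at the outset — that projective $A$-modules, and $\Hom$-modules between them, are the expected fibre products, and that isomorphism classes over $A$ with prescribed localizations are governed by the double cosets $\Aut_{A_2}(P_2)\backslash\Aut_{A_0}(P_0)/\Aut_{A_1}(P_1)$. Once that is in place, each of $(i)$--$(iii)$ reduces to a one-step lifting whose sole nontrivial input is the prescribed one-sided surjectivity over $A_0$ — precisely the datum that cannot be read off from $A_1$ alone — so I do not anticipate any serious obstacle beyond bookkeeping with these identifications.
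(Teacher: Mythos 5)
The paper does not prove this proposition at all: it is quoted verbatim from Wiemers (\cite{Wiemers}, Proposition 2.1) and used as a black box, so there is no in-paper argument to compare against. Your proof is correct and is the standard Milnor-patching argument one would expect. All three parts reduce cleanly to the fibre-product descriptions $\Hom_A(M,N)\cong\Hom_{A_1}(M_1,N_1)\times_{\Hom_{A_0}(M_0,N_0)}\Hom_{A_2}(M_2,N_2)$, $\Aut_A(P)\cong\Aut_{A_1}(P_1)\times_{\Aut_{A_0}(P_0)}\Aut_{A_2}(P_2)$, and (for $(iii)$) $\Um(P)=\Um(P_1)\times_{\Um(P_0)}\Um(P_2)$; the first of these follows from the cartesian-square property for projective modules stated in the paper's preliminaries, applied to the projective module $M^*\ot_A N$, so the ``technical point'' you flag is genuinely covered. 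The double-coset argument in $(ii)$ and the lifting in $(i)$ are exactly right. In $(iii)$ the one step worth spelling out is the inclusion $\Um(P_1)\times_{\Um(P_0)}\Um(P_2)\subseteq\Um(P)$: your reduction via $\ma_1\cap\ma_2=0$, primality of maximal ideals, and the residue criterion $p\notin\mm P$ does establish it, since every $P/\mm P$ is then a quotient of some $P_i/\ol{\mm}P_i$ in which the image of $p_i$ is nonzero. No gaps.
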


\begin{theorem}\label{w3}
Let $A$ be a ring and let $J$ be an ideal of $R=A[X_1,\ldots,X_n]$
generated by square free monomials. Then the natural map $\GL_r(R) \ra
\GL_r(R/J)$ is surjective.
\end{theorem}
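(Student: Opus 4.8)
The plan is to argue by induction on the number $w=w(J)$ of variables that actually occur in a minimal monomial generator of $J$, the induction running uniformly over all base rings $A$, all $n$, and the fixed rank $r$. If $w=0$ then $J=0$ and there is nothing to prove, so assume $w\ge 1$ and, after renumbering, that $X_n$ occurs in $J$. Writing $S=A[X_1,\ldots,X_{n-1}]$ and $R=S[X_n]$, I would first record a structural decomposition of $R/J$. Let $I_0\subseteq S$ be generated by the square free monomials in $J$ not involving $X_n$, and let $I_1\subseteq S$ be generated by the square free monomials $m'$ in $S$ with $X_nm'\in J$; then $J=I_0R+X_nI_1R$, and $I_0\subseteq I_1$ (if $m\in I_0$ then $m\in J$, so $X_nm\in J$, whence $m\in I_1$). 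Grading by the $X_n$-degree shows that $R/J\cong B\times_C C[X_n]$, where $B=S/I_0$, $C=S/I_1$, the map $B\ra C$ is the natural surjection, and $C[X_n]\ra C$ is evaluation at $X_n=0$. The arithmetic heart of the induction is that every variable occurring in a minimal generator of $I_0$ or of $I_1$ already occurs in $J$ and is distinct from $X_n$; for $I_1$ this uses minimality, since if $X_j\mid m'$ but $X_j$ occurred in no minimal generator of $J$, one could cancel $X_j$ (using square freeness) and contradict minimality of $m'$. Hence $w(I_0),w(I_1)<w$, so the inductive hypothesis is available for both $I_0\subseteq S$ and $I_1R\subseteq R$.

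Next I would reduce an arbitrary $\bar\ga\in\GL_r(R/J)$ to a matrix that is trivial modulo $X_n$. Since $J+X_nR=I_0R+X_nR$, reduction modulo $X_n$ lands in $\GL_r(R/(J+X_nR))=\GL_r(S/I_0)=\GL_r(B)$; call the image $\gb_0$. Because $w(I_0)<w$, the inductive hypothesis (applied in the polynomial ring $S$) lifts $\gb_0$ to $\wt\gb\in\GL_r(S)\subseteq\GL_r(R)$. Replacing $\bar\ga$ by $\bar\ga\cdot(\wt\gb\bmod J)^{-1}$, I may assume $\bar\ga$ reduces to the identity modulo $X_n$. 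Using the cartesian description $R/J=B\times_C C[X_n]$, together with the standard fact (justified as in \cite{Milnor}, Section 2) that $\GL_r$ of a fibre product of rings is the fibre product of the groups $\GL_r$, such a $\bar\ga$ is determined by its image $\gd\in\GL_r(C[X_n])$, and the triviality modulo $X_n$ forces $\gd(0)=I$.

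Finally I would lift $\gd$. Since $C[X_n]=(S/I_1)[X_n]=R/I_1R$ and $I_1R$ is a square free monomial ideal of $R$ with $w(I_1R)=w(I_1)<w$, the inductive hypothesis applies to $I_1R$ in the same ring $R$ and gives surjectivity of $\GL_r(R)\ra\GL_r(C[X_n])$. Lifting $\gd$ to some $\widehat{\gd}\in\GL_r(R)$ and normalising by $\wt\gd:=\widehat{\gd}(0)^{-1}\widehat{\gd}$ (note $\widehat{\gd}(0)\in\GL_r(S)$, being the evaluation at $X_n=0$ of an invertible matrix) yields a lift with $\wt\gd(0)=I$ that still maps to $\gd$. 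Under $R/J=B\times_C C[X_n]$, the $B$-component of $\wt\gd\bmod J$ is $\wt\gd(0)\bmod I_0=I$ and the $C[X_n]$-component is $\gd$, so $\wt\gd\bmod J=\bar\ga$. Then $\wt\gd\,\wt\gb\in\GL_r(R)$ reduces modulo $J$ to the original matrix, completing the inductive step.

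The step I expect to demand the most care is arranging a genuinely terminating induction. The naive reduction modulo $X_n$ leaves behind the ``homotopy'' factor $\gd$ over $C[X_n]$, and lifting it is again an instance of the theorem over the \emph{same} $n$-variable ring $R$; measuring difficulty by $n$ would therefore be circular. The remedy is to measure instead by the number of variables occurring in $J$ and to verify the combinatorial claim that $I_1$ — the ideal controlling the homotopy factor — involves strictly fewer variables than $J$; this is the one point that genuinely uses square freeness and minimality. By contrast, the graded computation of $R/J$, the fibre product description of $\GL_r$, and the normalisation $\widehat{\gd}\mapsto\widehat{\gd}(0)^{-1}\widehat{\gd}$ are routine.
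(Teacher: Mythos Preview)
The paper does not actually prove Theorem~\ref{w3}; it is quoted in the preliminaries as a result of Wiemers (\cite{Wiemers}, Theorem~2.3) and is used as a black box in the proof of Theorem~\ref{k3}. So there is no in-paper argument to compare against.

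That said, your argument is correct. The fibre-product identification $R/J\cong B\times_{C}C[X_n]$ with $B=S/I_0$, $C=S/I_1$ is exactly the cartesian square of Proposition~\ref{v1} written in ideal-theoretic language (your $B$, $C[X_n]$, $C$ play the roles of $A(\Sigma_1)$, $A(C(\Sigma_2))$, $A(\Sigma_2)$), and the lifting step is the $P=\text{free}$ case of Proposition~\ref{w1}(i). The graded computation showing $\ker(R\to B\times_C C[X_n])=I_0\oplus\bigoplus_{k\ge1}I_1X_n^{k}=J$ is right, as is the verification that minimal generators of $I_1$ involve only variables already present in the minimal generators of $J$ (your cancellation argument is the clean way to see this). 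The normalisation $\wt\gd=\widehat\gd(0)^{-1}\widehat\gd$ is harmless because $\widehat\gd(0)\bmod I_1=\gd(0)=I$, so the $C[X_n]$-component is unchanged.

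One comment worth making explicit for a reader: the reason you cannot simply induct on $n$ is precisely the point you flag at the end. After trivialising modulo $X_n$ you must lift $\gd$ through $\GL_r(R)\to\GL_r(R/I_1R)$, and $I_1R\subset R$ still lives in the same $n$-variable ring. Wiemers' original proof and Vorst's lemma are organised so that the induction runs on the simplicial complex (equivalently on $n$), which requires an extra observation to close the loop; your choice of $w(J)$ as the induction parameter sidesteps this neatly and makes the argument entirely self-contained. Either route yields the theorem, but yours is arguably the more transparent packaging of the same fibre-product idea already present in the paper via Proposition~\ref{v1}.
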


Given a simplicial subcomplex $\Sigma$ of $\Delta_n$ and a ring $A$,
let $I(\Sigma)$ be the ideal of $A[X_0,\ldots,X_n]$ generated by all
square free monomials $X_{i_1}X_{i_2}\ldots X_{i_k}$ with $0\leq
i_1<i_2<\ldots <i_k\leq n$ and $\{i_1,\ldots,i_k\}$ is not a face of
$\Sigma$. By $A(\Sigma)$, we denote the discrete Hodge $A$-algebra
$A[X_0,\ldots,X_n]/I(\Sigma)$. 
\medskip

The following result is due to Vorst
(\cite{Vorst}, Lemma 3.4) and is very crucial for the proof of our
results.

\begin{proposition}\label{v1}
Let $\Sigma$ be a simplicial subcomplex of $\Delta_n$ which is not a
simplex. Then there exists an $i\in \{ 0,1,\ldots,n\}$ and simplicial
subcomplexes $\Sigma_2 \subset \Sigma_1 \subset \Sigma$ such that we
have a cartesian square of rings
$$ \xymatrix{ A(\Sigma) \ar [r]^{i_1} \ar [d]_{i_2} & A(\Sigma_1)\ar
[d]^{j_1} \\ A(C(\Sigma_2)) \ar[r]_{j_2} & A(\Sigma_2) }$$ where all
maps are natural surjections and $\Sigma_2 \subset \Sigma_1 \subset
{\Sigma^i}$, where ${\Sigma^i}$ is the $n-1$ simplex of which $i$ is
not a vertex and $C(\Sigma_2)$ is the cone on $\Sigma_2$ with vertex
$i$. Note that $j_2$ is a split surjection and
$A(C(\Sigma_2))=A(\Sigma_2)[X_i]$.
\end{proposition}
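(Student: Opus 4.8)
The plan is to realize the square as the standard Milnor patching square attached to a pair of subcomplexes of $\Sigma$, and to read off the cartesian property from the Stanley--Reisner dictionary relating unions and intersections of simplicial complexes to intersections and sums of the associated monomial ideals. Write $R=A[X_0,\ldots,X_n]$, so that $A(\Gamma)=R/I(\Gamma)$ for every subcomplex $\Gamma\subseteq\Delta_n$. Two elementary identities drive everything: for subcomplexes $\Gamma,\Gamma'$ of $\Delta_n$ one has $I(\Gamma\cup\Gamma')=I(\Gamma)\cap I(\Gamma')$ and $I(\Gamma\cap\Gamma')=I(\Gamma)+I(\Gamma')$. Both are checked on the square-free monomials $X^T=\prod_{j\in T}X_j$: such a monomial lies in $I(\Gamma)\cap I(\Gamma')$ iff $T$ is a non-face of both $\Gamma$ and $\Gamma'$, i.e.\ a non-face of $\Gamma\cup\Gamma'$; and it lies in $I(\Gamma)+I(\Gamma')$ iff $T$ contains a minimal non-face of $\Gamma$ or of $\Gamma'$, i.e.\ $T$ is a non-face of $\Gamma\cap\Gamma'$. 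As all the ideals involved are radical square-free monomial ideals, agreement on square-free monomials forces equality.

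First I would choose the vertex $i$. Since $\Sigma$ is not a simplex, its full vertex set $V$ is not a face (otherwise all subsets of $V$ would be faces and $\Sigma$ would be the simplex on $V$); taking a minimal non-face $\tau\subseteq V$ and using that each $\{v\}$ with $v\in V$ is a face, we get $|\tau|\ge 2$. Fix any $i\in\tau$, and set $\Sigma_1=\{F\in\Sigma:\ i\notin F\}$ (the deletion of $i$) and $\Sigma_2=\{F\in\Sigma:\ i\notin F,\ F\cup\{i\}\in\Sigma\}$ (the link of $i$). By construction $\Sigma_2\subseteq\Sigma_1$, and $\Sigma_1$ contains no face involving $i$, so $\Sigma_1\subseteq\Sigma^i$. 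The cone $C(\Sigma_2)=\{i\}*\Sigma_2$ is exactly the closed star $\{F\in\Sigma:\ F\cup\{i\}\in\Sigma\}$ of $i$ in $\Sigma$. The choice of $i$ makes the decomposition nontrivial: by minimality $\tau\setminus\{i\}\in\Sigma$ lies in $\Sigma_1$, whereas $(\tau\setminus\{i\})\cup\{i\}=\tau\notin\Sigma$ shows $\tau\setminus\{i\}\notin\Sigma_2$, so $\Sigma_2\subsetneq\Sigma_1$.

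Next I would verify the two combinatorial identities $\Sigma=\Sigma_1\cup C(\Sigma_2)$ and $\Sigma_2=\Sigma_1\cap C(\Sigma_2)$. For the first, a face $G\in\Sigma$ with $i\notin G$ lies in $\Sigma_1$, while a face $G$ with $i\in G$ satisfies $G\setminus\{i\}\in\Sigma_2$ and hence $G\in\{i\}*\Sigma_2=C(\Sigma_2)$; the reverse inclusion is clear since $\Sigma_1$ and $C(\Sigma_2)$ are subcomplexes of $\Sigma$. For the second, the faces of $C(\Sigma_2)$ not containing $i$ are precisely those of $\Sigma_2$, and these all lie in $\Sigma_1$; conversely any common face omits $i$ and lies in the cone, hence in $\Sigma_2$. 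Feeding these into the dictionary yields $I(\Sigma)=I(\Sigma_1)\cap I(C(\Sigma_2))$ and $I(\Sigma_2)=I(\Sigma_1)+I(C(\Sigma_2))$. These two equalities are exactly the condition for the square of quotient rings $R/I(\Sigma),\,R/I(\Sigma_1),\,R/I(C(\Sigma_2)),\,R/I(\Sigma_2)$ with the natural surjections $i_1,i_2,j_1,j_2$ to be cartesian: the comparison map $A(\Sigma)\to A(\Sigma_1)\times_{A(\Sigma_2)}A(C(\Sigma_2))$ is injective because $I(\Sigma)$ is the intersection of the two kernels, and surjective because any two representatives with equal image in $A(\Sigma_2)$ differ by an element of $I(\Sigma_1)+I(C(\Sigma_2))=I(\Sigma_2)$ and can thus be corrected to a common lift.

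Finally I would record the two additional claims. Since $i$ is the apex of $C(\Sigma_2)=\{i\}*\Sigma_2$, adjoining $i$ to any face of $\Sigma_2$ remains a face, so no minimal non-face of $C(\Sigma_2)$ contains $i$; equivalently $X_i$ divides none of the generators of $I(C(\Sigma_2))$. Hence $A(C(\Sigma_2))=A(\Sigma_2)[X_i]$ with $I(\Sigma_2)=I(C(\Sigma_2))+(X_i)$, and $j_2\colon A(\Sigma_2)[X_i]\to A(\Sigma_2)$ is the evaluation $X_i\mapsto 0$, split by the inclusion $A(\Sigma_2)\hookrightarrow A(\Sigma_2)[X_i]$. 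The step I expect to require the most care is the geometric identity $\Sigma=\Sigma_1\cup C(\Sigma_2)$ together with the precise matching of closed star, link and deletion to the Stanley--Reisner ideals; once that translation is pinned down, the cartesian property and the split polynomial structure follow formally.
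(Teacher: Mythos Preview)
The paper does not supply a proof of this proposition; it is quoted from Vorst (\cite{Vorst}, Lemma~3.4) and used as a black box. Your argument is correct and is precisely the standard one underlying Vorst's lemma: take $i$ a vertex lying in some minimal non-face, let $\Sigma_1$ be the deletion and $\Sigma_2$ the link of $i$, so that $C(\Sigma_2)$ is the closed star; the identities $\Sigma=\Sigma_1\cup C(\Sigma_2)$ and $\Sigma_1\cap C(\Sigma_2)=\Sigma_2$ translate via the Stanley--Reisner correspondence into $I(\Sigma)=I(\Sigma_1)\cap I(C(\Sigma_2))$ and $I(\Sigma_1)+I(C(\Sigma_2))=I(\Sigma_2)$, which is exactly the Milnor patching condition for the square of quotients of $R=A[X_0,\ldots,X_n]$. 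Your verification that $I(C(\Sigma_2))$ has no generator divisible by $X_i$, hence $A(C(\Sigma_2))=A(\Sigma_2)[X_i]$ with $j_2$ the split evaluation $X_i\mapsto 0$, is also the standard observation.

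One small remark: your justification of $I(\Gamma\cap\Gamma')=I(\Gamma)+I(\Gamma')$ implicitly uses that a sum of square-free monomial ideals is again a square-free monomial ideal (hence determined by its square-free members); this is immediate from the union of generating sets, but worth stating since the analogous statement for intersections is less automatic. Also, you could note explicitly that $\Sigma_1\subsetneq\Sigma$ because $\{i\}\in\Sigma\setminus\Sigma_1$, matching the strict inclusions in the statement. Otherwise there is nothing to add; what you wrote is the proof.
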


We end this section by stating two results of Wiemers (\cite{Wiemers},
Theorem 3.6) and (\cite{Wiemers1}, Theorem 4.3) respectively which
will be used in section $4$.

\begin{theorem}\label{w4}
Let $A$ be a ring of dimension $d$.  Let $I\subset J$ be ideals in
$R=A[X_1,\ldots,X_n]$ generated by square free monomials. Let $P $ be
a projective module over $R/I$. If either rank $P\geq d+1$ or rank
$P\geq d$ and $1/d!\in A$, then the natural map $\Aut_{R/I}(P)\ra
\Aut_{R/J}(P/\ol JP)$ with $\ol J=J/I$ is surjective.
\end{theorem}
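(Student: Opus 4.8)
The plan is to translate the statement into the Stanley--Reisner language and then argue by induction on the size of the simplicial complex, using Milnor patching to push the lifting property up from polynomial rings to all discrete Hodge algebras. Write $I=I(\Sigma)$ and $J=I(\Gamma)$ for simplicial complexes on $\{0,1,\dots,n\}$; since $I\subseteq J$ we have $\Gamma\subseteq\Sigma$, and the claim is the surjectivity of the restriction map $\Aut_{A(\Sigma)}(P)\ra\Aut_{A(\Gamma)}(P\ot_{A(\Sigma)}A(\Gamma))$. Adjoining to $I$ the square free monomial $\prod_{j\in F}X_j$ of a \emph{facet} $F$ of $\Sigma$ removes exactly $F$ and nothing else, so any $\Gamma\subseteq\Sigma$ can be reached from $\Sigma$ by a finite chain $\Sigma=\Sigma^{(0)}\supset\cdots\supset\Sigma^{(t)}=\Gamma$ in which each step deletes a single facet. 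As surjectivity of restriction maps on $\Aut$ is stable under composition, I would reduce to a single such elementary step.

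Next I would isolate the transvection mechanism that actually produces lifts. Each map $A(\Sigma)\surj A(\Sigma^{(1)})$ is a ring surjection, so $P\surj\ol P$ and, $P$ being projective, also $P^{*}\surj\ol P^{*}$. Consequently every elementary transvection of $\ol P$ lifts to one of $P$ — lift a vector and a functional, then correct the functional by an element of the kernel, which is legitimate because one of the two is unimodular — so the image of the restriction map always contains the elementary transvection subgroup $ET(\ol P)$. Applying Theorem \ref{w3} to a free complement lifts, in addition, the automorphisms visible on a free summand. What then remains is to show that, under the rank hypothesis, every automorphism of $\ol P$ coincides modulo these liftable ones with an element of $ET(\ol P)$.

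The inductive engine for the elementary step is Proposition \ref{v1}. Choosing a vertex $i$ of the facet to be removed yields the cartesian square with $A(\Sigma)$, the deletion $A(\Sigma_1)$, the link $A(\Sigma_2)$, and the polynomial extension $A(C(\Sigma_2))=A(\Sigma_2)[X_i]$, the lower arrow being $X_i\mapsto0$. Proposition \ref{w1}(i) reduces surjectivity of $\Aut_{A(\Sigma)}(P)\ra\Aut_{A(\Sigma_1)}(P_1)$ to that of $\Aut_{A(\Sigma_2)[X_i]}(P_2)\ra\Aut_{A(\Sigma_2)}(P_0)$; since the closed star $C(\Sigma_2)$ carries strictly fewer faces than $\Sigma$, this is an instance of the theorem over a smaller complex and is supplied by the induction hypothesis. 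Matching an arbitrary facet removal to these vertex based squares — so that the deletion actually realises the intended step and the module $P$ restricts compatibly around the square — is the combinatorial core, and is where the transvection lifting of the previous paragraph does the bridging work.

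The recursion terminates when $\Sigma$ is a simplex, i.e. when $A(\Sigma)=A[X_0,\dots,X_n]$ is a genuine polynomial ring over $A$, and this polynomial base case is where I expect the real difficulty to lie. One must lift automorphisms of $\ol P$ over a square free monomial quotient while the governing dimension is forced to be $\dim A=d$ rather than the far larger dimension of the quotient ring; this is exactly the regime of the transvection transitivity and generation theorems for projective modules of rank $r\geq d+1$ (respectively $r\geq d$ when $1/d!\in A$) over polynomial extensions of $A$. Those results say that such an automorphism is a product of transvections up to a factor that lifts, which, combined with the transvection lifting above, settles the base case; the patching of the third paragraph then transports it to every discrete Hodge algebra. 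The two genuine obstacles are therefore (i) this sharp base case with bound $d$ — the deep input — and (ii) arranging the facet-by-facet descent so that each step is captured by a Proposition \ref{v1} square.
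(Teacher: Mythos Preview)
The paper does not prove Theorem~\ref{w4}. It is quoted without proof in the preliminaries section as a result of Wiemers (\cite{Wiemers}, Theorem~3.6), to be used as a black-box input in Section~4. There is therefore no proof in this paper against which to compare your attempt.

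Regarding your sketch on its own terms: it is an outline rather than a proof, and you acknowledge as much. Both obstacles you flag are genuine. The base case over a polynomial ring is essentially the entire content of the theorem and rests on the transvection-generation results for projective modules over polynomial extensions (work of Suslin, Lindel, and others) that you gesture toward but do not invoke precisely; without a concrete statement there, nothing has been proved. The combinatorial issue is also real: Proposition~\ref{v1} produces a \emph{particular} pair $\Sigma_2\subset\Sigma_1\subset\Sigma$ determined by $\Sigma$, not a decomposition tailored to an arbitrary single-facet deletion, so your reduction to facet-by-facet steps does not mesh with the cartesian square that proposition actually supplies, and Proposition~\ref{w1}(i) then only gives surjectivity onto $\Aut_{A(\Sigma_1)}(P_1)$ for that specific $\Sigma_1$, not onto the quotient you want. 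What you have is a reasonable heuristic for why such a theorem should be accessible by patching, but not yet a proof.
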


\begin{theorem}\label{w41}
Let $A$ be a ring of dimension $d$ with $1/d!\in A$ and
$B=A[X_1,\ldots,X_n]$. Let $P$ and $P_1$ be projective $B$-modules
of rank $\geq d$. Assume $P\op B\iso P_1 \op B$. If
$P/(X_1,\ldots,X_n)P \iso P_1/(X_1,\ldots,X_n)P_1$, then $P \iso
P_1$.  

In other words, if the projective $A$-module $P/(X_1,\ldots,X_n)P$ is
cancellative, then $P$ is cancellative.
\end{theorem}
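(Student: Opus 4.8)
\medskip
\noindent{\bf Proof proposal.}
The plan is to rephrase the conclusion $P\iso P_1$ as a statement about the orbit of a single unimodular element, to normalise that element modulo $\mm:=(X_1,\ldots,X_n)$ using Theorem \ref{w4}, and then to finish by a local--global argument over $\Spec A$. Recall the dictionary between free rank-one complements and unimodular elements: put $Q=P_1\op B$ and $e=(0,1)\in Q$, so that $Q/Be\iso P_1$; for a projective $B$-module $M$ one has $M\op B\iso Q$ if and only if $M\iso Q/Bv$ for some $v\in\Um(Q)$, and $Q/Bv\iso Q/Bv'$ if and only if $v$ and $v'$ lie in one $\Aut_B(Q)$-orbit. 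Fixing $v\in\Um(Q)$ with $Q/Bv\iso P$, the claim $P\iso P_1$ is exactly the claim that $v$ lies in the $\Aut_B(Q)$-orbit of $e$.

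Now reduce modulo $\mm$ (note $B/\mm B=A$); write $\ol Q$, $\ol e$, $\ol v$ for the images of $Q$, $e$, $v$, and set $P_0=P/\mm P$, $(P_1)_0=P_1/\mm P_1$. Then $\ol Q=(P_1)_0\op A$, with $\ol e$ corresponding to $(P_1)_0$ and $\ol v$ to $P_0$, so the same dictionary over $A$ translates the hypothesis $P_0\iso(P_1)_0$ into: $\ol v$ and $\ol e$ lie in one $\Aut_A(\ol Q)$-orbit. Since rank $Q\ge d+1$, Theorem \ref{w4}, applied with $I=0$ and $J=\mm$ (which is generated by the square-free monomials $X_1,\ldots,X_n$), shows that $\Aut_B(Q)\ra\Aut_A(\ol Q)$ is surjective. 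Lifting an automorphism carrying $\ol e$ to $\ol v$ and replacing $v$ by its preimage, we may henceforth assume $v\equiv e\pmod\mm$.

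It remains to prove that $v\equiv e\pmod\mm$ forces $v$ to lie in the $E_B(Q)$-orbit of $e$, where $E_B(Q)\subseteq\Aut_B(Q)$ is generated by the transvections of $Q$; this is enough, since then $P\iso Q/Bv\iso Q/Be\iso P_1$. By a Quillen--Suslin type local--global principle for the elementary action on unimodular elements of a projective module over a polynomial ring --- here in the form that does not require the module to be extended from $A$ (cf.\ Lindel, Rao, Wiemers) --- it suffices to check that $v_\p$ lies in the $E_{B_\p}(Q_\p)$-orbit of $e_\p$ for every maximal ideal $\p$ of $A$. Over the local ring $A_\p$, the module $Q_\p=(P_1)_\p\op A_\p[X_1,\ldots,X_n]$ has rank $\ge\dim A_\p+1$, with equality only when rank $P_1=d=\dim A_\p$; whenever the rank is strictly larger, transitivity of the elementary orbit on all unimodular elements is known in this range (Plumstead, Bhatwadekar--Roy). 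The remaining case --- a rank-$(d+1)$ projective over $A_\p[X_1,\ldots,X_n]$ with $A_\p$ local of dimension $d$ --- is the crux, and the single point at which the hypothesis $1/d!\in A$ is genuinely used: at this critical rank, equal to the dimension, cancellation can fail in general, and it is precisely the invertibility of $d!$ that kills the obstruction, through Suslin- and Roitman-type manipulations of the elementary action on unimodular rows (Mennicke and Vaserstein symbols). I expect this local step --- together with having the local--global principle available in the non-extended setting --- to be the main obstacle; everything preceding it is bookkeeping with the patching machinery of Section~2 and with Theorem \ref{w4}. Finally, the reformulation is immediate: if $P\op B\iso P_1\op B$ then $P_0\op A\iso(P_1)_0\op A$, whence $P_0\iso(P_1)_0$ once $P_0$ is cancellative, and the first assertion gives $P\iso P_1$.
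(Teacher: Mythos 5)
First, a point of comparison: the paper does not prove Theorem \ref{w41} at all --- it is quoted as an external result of Wiemers (\cite{Wiemers1}, Theorem 4.3) and used as a black box in Section 4. So there is no internal proof to measure your argument against; it has to stand on its own.

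As a standalone argument it has a genuine gap, and you have located it yourself. The translation of $P\iso P_1$ into the statement that $v$ and $e$ lie in one $\Aut_B(Q)$-orbit for $Q=P_1\op B$, and the normalisation $v\equiv e$ modulo $(X_1,\ldots,X_n)$ via Theorem \ref{w4} (legitimate, since rank $Q\geq d+1$), are correct but routine. Everything after that is asserted rather than proved, and two separate load-bearing inputs are missing. First, the local--global principle for the transvection orbit of $\Um(Q)$ when $Q$ is a projective $A[X_1,\ldots,X_n]$-module \emph{not} assumed extended from $A$ is itself a substantial theorem (essentially material of Lindel and Wiemers), not a reduction you get for free; you cite it only as ``cf.'' Second, and more seriously, in the local critical-rank case your plan is to run Suslin--Roitman manipulations on \emph{unimodular rows} via Mennicke and Vaserstein symbols; this silently assumes that $Q_{\p}=(P_1)_{\p}\op A_{\p}[X_1,\ldots,X_n]$ is free over $A_{\p}[X_1,\ldots,X_n]$. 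Freeness of a rank-$d$ projective module over $A_{\p}[X_1,\ldots,X_n]$ for $A_{\p}$ local is the Bass--Quillen conjecture and is not available, so the reduction to rows does not go through, and the critical-rank transitivity statement --- the only place $1/d!$ can enter, and the step carrying essentially all of the content of the theorem --- is left unproved. The skeleton is a sensible plan, but the two steps you defer are exactly the hard ones, so the proposal does not constitute a proof.
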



\section{Main Theorem}

In this section we prove our main results mentioned in the introduction. 
\medskip

{\bf Proof of Theorem \ref{k1} :}
Let $B=A[X_0,\ldots,X_n]/I$ be a discrete Hodge $A$-algebra and let $P$ be
a projective $B$-module of rank $r$ (here $I$ is a monomial ideal). It
is enough to assume that $I$ is a square free monomial ideal. Then
$I=I(\Sigma)$ for some simplicial subcomplex $\Sigma$ of $\Delta_n$
and $B=A(\Sigma)$. We will use induction on $n$.

If $n=0$, then there is nothing to prove, as $A(\Sigma)=A$ or
$A[X_0]$. Let $n>0$ and assume the result for $n-1$. We will apply
(\ref{v1}). By induction hypothesis, all projective modules of rank
$r$ over $A_1=A(\Sigma_1)$ and $A_0=A(\Sigma_2)$ are extended from
$A$. Also all projective modules of rank $r$ over
$A_2=A(C(\Sigma_2))=A(\Sigma_2)[X_i]$ are extended from $A[X_i]$ and
hence are extended from $A$.

Write $P_i=P\ot_A A_i$, $i=0,1,2$. Clearly, the natural map
$\Aut_{A_2}(P_2) \ra \Aut_{A_0}(P_0)$ is surjective. Hence, if
$Q=P/(X_0,\ldots,X_n)P$, then $P_1 \iso Q\ot A_1$ and $P_2\iso Q\ot
A_2$, by induction hypothesis. Hence, by $(\ref{w1}(ii))$, $P\iso Q\ot A$,
i.e. $P$ is extended from $A$. This proves the result.  $\hfill \square$
\medskip

{\bf Proof of Theorem \ref{k2} :}
Let $R=A[X_0,\ldots,X_n]/I$ be a discrete Hodge $A$-algebra and let $P$ be
a projective $R$-module of rank $r$ (here $I$ is a monomial ideal). Again, it
is enough to assume that $I$ is a square free monomial ideal. Then
$I=I(\Sigma)$ for some simplicial subcomplex $\Sigma$ of $\Delta_n$
and $R=A(\Sigma)$. We will use induction on $n$.

When $n=0$, there is nothing to prove as $R=A$ or $A[X_0]$. Let
$n>0$ and assume the result for $n-1$. We will apply (\ref{v1}). Let
$A_1=A(\Sigma_1)$, $A_2=A(C(\Sigma_2))$ and $A_0=A(\Sigma_2)$. Write
$P_i=P\ot_A A_i$ for $i=0,1,2$.

Since $R\ra A_i$ are natural surjections, $P_i \op A_i$ are extended
from $A$, $i=1,2$. Hence, by induction hypothesis $P_i$ is extended
from $A$, $i=1,2$. Therefore, if $Q=P/(X_0,\ldots,X_n)P$,  $P_i\iso Q\ot
A_i$, $i=1,2$.  Clearly, the natural map $\Aut_{A_2}(P_2) \ra
\Aut_{A_0}(P_0)$ is surjective. Hence,  by $(\ref{w1}(ii))$, $P\iso Q\ot_A R$,
i.e. $P$ is extended from $A$. This proves the result. $\hfill \square$
\medskip

{\bf Proof of Theorem \ref{k3} :} It is enough to show that
the natural map $\Um_r(R)\ra \Um_r(R/J)$ is surjective for every ideal
$J$ of $R$ generated by square free monomials.

Let $v\in \Um_r(R/J)$. We have an exact sequence $0\ra P \ra (R/J)^r
\by v R/J \ra 0$.

Since $P\op A/J$ is free, by (\ref{k2}), $P$ is extended from $A$,
i.e. $P=\ol P \ot_A R$, where $\ol P = P/(X_1,\ldots,X_n)P$. Hence, we
have the following commutative diagram

$$
\xymatrix{ 
	0 \ar [r] & \ol P\ot_A R \ar [r] \ar [d]^\iso & (R/J)^r \quad \ar
	[r]^{v(0)\otimes R} \ar@{-->} [d]^{\sigma } & R/J  
	\ar [r] \ar [d]^{id} & 0      \\ 
	0 \ar [r] & P \ar [r] & (R/J)^r \quad \ar [r]^{v} &
	R/J \ar [r] & 0
	}
$$ where $v(0)$ is the image of $v$ in $\Um_r(A)$ under the map $R/J
\ra A$ given by $\ol X_i \mapsto 0$, $i=1,\ldots,n$.  Hence, there
exists $\sigma \in \GL_r(R/J)$ such that $v\sigma = v(0)\ot R$. By
(\ref{w3}), $\sigma$ can be lifted to $\Delta \in
\GL_r(R)$ and $v(0)\Delta^{-1}\in \Um_r(R)$ is a lift of $v$. This
proves the result.  $\hfill \square$

\section{Some Auxiliary Results}

As an application of (\ref{v1}), we will give an alternative proof of the 
following result of Wiemers (\cite{Wiemers}, Corollary 4.4).

\begin{theorem}
Let $A$ be a ring of dimension $d$ with $1/d!\in A$. Let
$B=A[X_0,\ldots,X_n]/I$ be a discrete Hodge $A$-algebra. Let $P$ be
a projective $B$-module of rank $\geq d$. If the projective $A$-module
$P/(X_0,\ldots,X_n)P$ is cancellative, then $P$ is cancellative.
\end{theorem}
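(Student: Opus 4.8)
The strategy mirrors the proofs of Theorems \ref{k1} and \ref{k2}: reduce to square-free monomial ideals, induct on $n$ using Vorst's cartesian square (\ref{v1}), and patch the cancellation property across the square. First I would reduce to the case where $I$ is generated by square-free monomials (the standard trick of replacing $X_i$ by a large power, which does not change the isomorphism class of the algebra up to a polynomial-variable change), so that $B=A(\Sigma)$ for a simplicial subcomplex $\Sigma$ of $\Delta_n$. The base case $n=0$ is trivial since $A(\Sigma)$ is $A$ or $A[X_0]$, and in the latter case cancellation over $A[X_0]$ for modules of rank $\geq d$ follows from Theorem \ref{w41} (with $n=1$). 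If $\Sigma$ is a simplex then $B$ is a polynomial extension of $A$ and again Theorem \ref{w41} applies directly. So assume $n>0$, $\Sigma$ not a simplex, and the result known for smaller $n$.

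Now apply (\ref{v1}) to get the cartesian square with $A_1=A(\Sigma_1)$, $A_2=A(C(\Sigma_2))=A(\Sigma_2)[X_i]$, $A_0=A(\Sigma_2)$, all maps natural surjections, and $j_2$ split. Write $P_i=P\otimes_B A_i$. Since $\Sigma_1,\Sigma_2\subset \Sigma^i$ involve at most $n-1$ of the variables, the induction hypothesis applies to $A_1$ and $A_0$: the $A_i$-module $P_i$ is cancellative once we know its reduction mod the remaining variables is cancellative over $A$. For $A_2=A(\Sigma_2)[X_i]$, first reduce mod $X_i$ to $A(\Sigma_2)=A_0$ and invoke Theorem \ref{w41} to pass cancellation from $A_0$-level up to $A_2$ — here one needs rank $\geq d$, which holds, and $1/d!\in A$, which is assumed. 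The reduction of $P$ mod all the $X_j$'s is the fixed $A$-module $\bar P = P/(X_0,\ldots,X_n)P$, which is cancellative by hypothesis; each $P_i$ reduces to $\bar P$ compatibly, so by the induction hypothesis (for $i=1$) and by Theorem \ref{w41} applied to the polynomial extension $A_0 \to A_2$ (for $i=2$, after the induction hypothesis gives cancellation of $P_0$ over... actually $P_0$ is a module over the lower-dimensional Hodge algebra $A(\Sigma_2)$, so induction applies), we conclude $P_1$ and $P_2$ are cancellative.

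Finally, to combine: by Theorem \ref{w4} (with the ideals defining the surjection $A_2 \twoheadrightarrow A_0$, both generated by square-free monomials, and rank $P_2 \geq d$ with $1/d!\in A$), the map $\Aut_{A_2}(P_2)\to \Aut_{A_0}(P_0)$ is surjective. Then Proposition \ref{w1}(ii) says: since $P_1,P_2$ have the cancellation property, so does $P=P_B$. That is the conclusion. The main obstacle is bookkeeping the hypotheses of Theorem \ref{w41} and Theorem \ref{w4} at each inductive stage — specifically, confirming that the relevant projective modules genuinely have rank $\geq d$ (unchanged under base change and reduction) and that the surjections appearing in Vorst's square are of the square-free-monomial type required by Theorem \ref{w4}; the latter is exactly the content built into (\ref{v1}), since $A(C(\Sigma_2))\to A(\Sigma_2)$ is the reduction modulo such an ideal. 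No genuinely new idea beyond the pattern of Theorems \ref{k1} and \ref{k2} is needed. $\hfill\square$
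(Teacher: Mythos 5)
Your proposal is essentially the paper's proof: reduce to square-free monomial ideals, induct on $n$ via Vorst's cartesian square (\ref{v1}), obtain surjectivity of $\Aut_{A_2}(P_2)\ra \Aut_{A_0}(P_0)$ from Theorem \ref{w4}, and patch with Proposition \ref{w1}(ii). The one place you diverge is the treatment of $P_2$ over $A_2=A(\Sigma_2)[X_i]$: you invoke Theorem \ref{w41} with base ring $A(\Sigma_2)$, but as stated that theorem requires modules of rank at least $\dim A(\Sigma_2)$ and invertibility of $(\dim A(\Sigma_2))!$, and $\dim A(\Sigma_2)$ generally exceeds $d$, so that invocation does not literally apply; the paper instead handles $P_2$, like $P_1$, by the induction hypothesis. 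Apart from this bookkeeping point (which you yourself flag as the main thing to check), the argument is the same as the paper's.
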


\begin{proof}
If $B$ is a polynomial ring over $A$, then the result follows from
(\ref{w41}).  It is enough to assume that $I$ is generated by square
free monomials.  Hence $I=I(\Sigma)$ for some simplicial subcomplex
$\Sigma$ of $\Delta_n$. We will apply induction on $n$.

By (\ref{v1}), we have the following cartesian square 

$$ \xymatrix{ A(\Sigma) \ar [r]^{i_1} \ar [d]_{i_2} & A(\Sigma_1)\ar
[d]^{j_1} \\ A(C(\Sigma_2)) \ar[r]_{j_2} & A(\Sigma_2) .}$$

By (\ref{w4}), the natural map $\Aut_{A(C(\Sigma_2))}(P\ot
A(C(\Sigma_2))) \ra \Aut_{A(\Sigma_2)}(P\ot A(\Sigma_2))$ is
surjective and by induction hypothesis on $n$, $P\ot A(C(\Sigma_2))$ and
$P\ot A(\Sigma_1)$ are cancellative. Hence, by $(\ref{w1}(ii))$, $P$ is
cancellative. This proves the result. $\hfill \square$
\end{proof}

\begin{theorem}\label{w42}
Let $A$ be a ring of dimension $d$ with $1/d! \in A$ and
$R=A[X_1,\ldots,X_n]$. Let $P$ be a projective $R$-module of rank $d$
such that $P\op R$ is extended from $A$. Then $P$ is
extended from $A$.
\end{theorem}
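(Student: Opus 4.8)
The plan is to unwind what it means for $P\op R$ to be extended from $A$, which turns the statement into a cancellation problem to which Theorem~\ref{w41} applies directly.

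First I would set $\ol P=P/(X_1,\ldots,X_n)P$, a projective $A$-module of rank $d$, and note that if $P\op R\cong M\ot_A R$ for some projective $A$-module $M$, then specialising $X_1=\cdots=X_n=0$ identifies $M$ with $(P\op R)/(X_1,\ldots,X_n)(P\op R)$. Since reduction modulo $(X_1,\ldots,X_n)$ commutes with finite direct sums, this is $\ol P\op A$, and hence
$$P\op R\;\cong\;(\ol P\op A)\ot_A R\;\cong\;(\ol P\ot_A R)\op R.$$
Put $P_1=\ol P\ot_A R$: this is a projective $R$-module of rank $d$ with $P\op R\cong P_1\op R$ and $P_1/(X_1,\ldots,X_n)P_1\cong\ol P\cong P/(X_1,\ldots,X_n)P$.

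Next I would apply Theorem~\ref{w41} with $B=R$. All its hypotheses hold: $\dim A=d$ and $1/d!\in A$; $P$ and $P_1$ are projective of rank $d$, hence of rank $\geq d$; $P\op R\cong P_1\op R$; and $P$, $P_1$ have isomorphic reductions modulo $(X_1,\ldots,X_n)$. Therefore $P\cong P_1=\ol P\ot_A R$, so $P$ is extended from $A$, as claimed.

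Given Theorem~\ref{w41}, the argument is essentially immediate, so there is no serious obstacle; the only step that wants a word of care is identifying the base module of the extended module $P\op R$ with $\ol P\op A$, i.e.\ checking that passing to the fibre over the origin respects the direct summand $R$. The genuine content sits inside Theorem~\ref{w41}, which is precisely Wiemers' cancellation theorem for rank-$d$ projective modules over $A[X_1,\ldots,X_n]$ when $1/d!\in A$ --- this is the boundary case $\mathrm{rank}=\dim$ not covered by Bass's cancellation theorem, and it is where the hypothesis $1/d!\in A$ is used.
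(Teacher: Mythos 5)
Your proof is correct, and it rests on the same key ingredient as the paper's --- Wiemers' cancellation theorem (Theorem~\ref{w41}) --- but you deploy it differently. The paper first invokes Quillen's local--global principle to reduce to the case of a local ring $A$, where $P\op R$ becomes free and $P/(X_1,\ldots,X_n)P$ is free as well (stably free over a local ring), and then applies Theorem~\ref{w41} with $P_1=R^d$ to conclude $P$ is free. You instead work globally: you identify the base module of the extended module $P\op R$ as $\ol P\op A$ by specialising at the origin, rewrite $P\op R\cong(\ol P\ot_A R)\op R$, and apply Theorem~\ref{w41} directly with $P_1=\ol P\ot_A R$. Your route buys a slightly leaner argument --- one fewer external input, since Quillen's patching theorem is not needed --- while the paper's route buys the convenience of only ever comparing $P$ with a free module. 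Both verifications of the hypotheses of Theorem~\ref{w41} are sound, and your one flagged point of care (that reduction modulo $(X_1,\ldots,X_n)$ respects the direct sum decomposition) is handled correctly.
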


\begin{proof}
By Quillen's local-global principle (\cite{Quillen}, Theorem 1), it is
enough to assume that $A$ is local.  Then $P\op R$ is free.  Since
$P/(X_1,\ldots,X_n)P$ is free, by (\ref{w41}), $P$ is free. This
proves the result.  $\hfill \square$
\end{proof}
\medskip

\begin{remark}
When $P$ is stably free, the above result (\ref{w42})
is due to Ravi A Rao (\cite{Ravi} Corollary 2.5). More precisely, Rao
proved that if $A$ is a ring of dimension $d$ with $1/d! \in A$, then
every $v\in \Um_{d+1}(A[X])$ is extended from $A$, i.e. there exists
$\sigma \in \SL_{d+1}(A[X])$ such that $v \sigma= v(0)$.
\end{remark}
\medskip

Following the proof of (\ref{k2}) and using (\ref{w42}), we get the
following:

\begin{corollary}\label{manoj}
Let $A$ be a ring of dimension $d$ with $1/d!\in A$. Let $B$ be a
discrete Hodge $A$-algebra. Let $P$ be a projective $B$-module of
rank $d$ such that $P\op B$ is extended from $A$, then $P$ is extended
from $A$. In particular, every stably free $B$-module of rank $d$ is
extended from $A$.
\end{corollary}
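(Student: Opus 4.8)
The plan is to follow the proof of Theorem~\ref{k2} essentially verbatim, replacing the input ``Theorem~\ref{k2}'' (which handles rank $r>d/2$ in positive characteristic prime to $r!$) with the input Theorem~\ref{w42} (which handles rank exactly $d$ over a polynomial ring when $1/d!\in A$). First I would reduce to the case where $B=A(\Sigma)$ for a simplicial subcomplex $\Sigma$ of $\Delta_n$: a monomial ideal may be replaced by its radical, which is a square-free monomial ideal, so $B=A[X_0,\ldots,X_n]/I(\Sigma)$. Then I would induct on $n$. The base case $n=0$ is trivial since $B=A$ or $B=A[X_0]$ (and in the latter case Theorem~\ref{w42} applies directly, or rather $P\op B$ free over $A[X_0]$ plus $1/d!\in A$ gives $P$ free, hence extended).

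For the inductive step, apply Proposition~\ref{v1} to obtain the cartesian square with $A_1=A(\Sigma_1)$, $A_2=A(C(\Sigma_2))=A(\Sigma_2)[X_i]$ and $A_0=A(\Sigma_2)$, all maps natural surjections, $j_2$ a split surjection. Set $P_i=P\ot_A A_i$. Since $R\surj A_i$, the module $P_i\op A_i$ is extended from $A$ for $i=1,2$. By induction hypothesis applied over $A(\Sigma_1)$ (smaller complex, still a discrete Hodge $A$-algebra with the same $d$ and $1/d!\in A$) we get $P_1$ extended from $A$. For $P_2$ over $A(\Sigma_2)[X_i]$, note $P_2\op A_2$ is extended from $A$, hence from $A(\Sigma_2)$; by induction hypothesis $P_2$ is extended from $A(\Sigma_2)$, and then, applying the result once more over the polynomial variable $X_i$ (this is where Theorem~\ref{w42}, or really its discrete-Hodge consequence being built inductively, is used), $P_2$ is extended from $A$. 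Thus with $Q=P/(X_0,\ldots,X_n)P$ we have $P_i\iso Q\ot A_i$ for $i=1,2$.

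Finally, I would check that $\Aut_{A_2}(P_2)\ra \Aut_{A_0}(P_0)$ is surjective: this is exactly the content of Theorem~\ref{w4} with $I=I(\Sigma_2)$, $J$ the monomial ideal cutting $\Sigma_2$ out of the cone $C(\Sigma_2)$, and the rank hypothesis ``rank $P\geq d$ and $1/d!\in A$'' satisfied. Then Proposition~\ref{w1}(ii) gives $P\iso Q\ot_A B$, i.e.\ $P$ is extended from $A$. For the last sentence, a stably free $B$-module of rank $d$ has $P\op B\iso B^{d+1}$, which is extended from $A$, so the main statement applies.

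The main obstacle is bookkeeping the rank and characteristic hypotheses through the two nested inductions (on $n$, and implicitly on the number of polynomial variables adjoined in handling $A(C(\Sigma_2))$): one must be careful that $\dim A$, the bound rank $=d$, and $1/d!\in A$ are all preserved when passing to $A(\Sigma_1)$, $A(\Sigma_2)$, and $A(\Sigma_2)[X_i]$, and that Theorem~\ref{w42} supplies exactly the polynomial-ring input that the proof of Theorem~\ref{k2} obtained from its own hypotheses. Everything else is a transcription of the arguments for Theorems~\ref{k2} and the preceding cancellation theorem.
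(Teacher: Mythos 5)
Your proposal matches the paper's proof, which is given in a single line --- ``Following the proof of (\ref{k2}) and using (\ref{w42})'' --- i.e., run the induction of Theorem \ref{k2} over Vorst's cartesian square (\ref{v1}), with Theorem \ref{w42} supplying the polynomial-ring input in place of Roitman's theorem, and conclude by Proposition \ref{w1}(ii). The only cosmetic difference is that you justify the surjectivity of $\Aut_{A_2}(P_2)\ra\Aut_{A_0}(P_0)$ via Theorem \ref{w4}, whereas the paper's argument for Theorem \ref{k2} gets it for free from the facts that $P_2$ is already known to be extended and $j_2$ is a split surjection; both justifications are valid here since rank $P=d$ and $1/d!\in A$.
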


During CAAG VII meeting in Bangalore, Kapil H Paranjape asked if we
can extend the above results (\ref{k1}, \ref{k2}, \ref{manoj}) for
locally discrete Hodge $A$-algebras (Definition: A positively graded
$A$-algebra $B$ is a locally discrete Hodge $A$-algebra if $B_{\p}$ is a
discrete Hodge $A_{\p}$-algebra for every prime ideal $\p$ of $A$. The
answer is yes and follows from the following result of Lindel
(\cite{lin}, Theorem 1.3) which generalises Quillen's patching theorem
(\cite{Quillen}, Theorem 1) from polynomial rings to positively graded
rings.

\begin{theorem}
Let $A$ be a ring and let $M$ be a finitely presented module over a
positively graded ring $R=\op_{i\geq 0}\, R_i$, $R_0=A$. Then the set
$J(A,M)$, of all $u\in A$ for which $M_u$ is extended from $A_u$, is
an ideal of $A$.

In particular, if $M_{\p}$ is extended from $A_{\p}$ for all prime ideal
$\p$ of $A$, then $M$ is extended from $A$.
\end{theorem}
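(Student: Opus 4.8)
The plan is to mimic the classical proof of Quillen patching, with the polynomial grading replaced by the given positive grading. The key structural fact we need is that a finitely presented $R$-module $M$ with $R=\op_{i\ge 0} R_i$, $R_0=A$, is extended from $A$ if and only if $M$ is \emph{homogeneous}, i.e. there is an isomorphism $M \iso R\ot_A (M/R_+M)$ where $R_+=\op_{i>0} R_i$; equivalently, the two maps $M \to M_R$ induced by the inclusion $A=R_0 \inj R$ and by the augmentation $R\surj A$ agree after tensoring up, or more precisely there is a grading on $M$ compatible with that on $R$. So the first step is to make this reformulation precise: show that for $u\in A$, $M_u$ being extended from $A_u$ is equivalent to the $R_u=\op_i (R_i)_u$-module $M_u$ admitting such a homogeneous structure.

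Next I would prove the ideal statement. Let $u,v\in J(A,M)$, so $M_u$ and $M_v$ are each homogeneous over $R_u$, $R_v$ respectively. The goal is to show $M_{u+v}$ is homogeneous over $R_{u+v}$; since $J(A,M)$ is clearly closed under multiplication by elements of $A$ (localizing further preserves being extended), this gives an ideal. The standard trick: work over $R[T]$ (or use the grading directly with an auxiliary indeterminate), and compare $M\ot_A A[T]$ with $(M/R_+M)\ot_A R[T]$ after localizing. Over $R_u[T]$ and over $R_v[T]$ these two finitely presented modules become isomorphic (because $M_u$, $M_v$ are homogeneous); one then patches these two isomorphisms over $R_{uv}[T]$ after adjusting by a unit/automorphism so they agree at $T=0$, using that an isomorphism defined over $R_{uv}[T]$ which is the identity at $T=0$ can have its "denominators cleared" — this is the Quillen–Suslin local-to-global maneuver, and it is here that the positive grading on $R$ (rather than just $R$ being a polynomial ring) must be exploited: one needs that $R_+$ behaves like the augmentation ideal of a polynomial ring, in particular that the conductor-type argument bounding the denominators in $T$ goes through. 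Then specializing $T=1$ transfers the patched isomorphism to $R_{uv}$, and a further glueing over the cover $\{u,v\}$ of $\Spec A$ (more precisely, over the cover by the principal opens $D(u)$, $D(v)$ of $D(u+v)$, using that $u,v$ generate the unit ideal in $A_{u+v}$) yields that $M_{u+v}$ is homogeneous.

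The second assertion follows formally: if $M_\p$ is extended from $A_\p$ for every prime $\p$, then for each $\p$ there is $u\notin\p$ with $M_u$ extended from $A_u$, so $J(A,M)\not\subset\p$ for all $\p$, whence $J(A,M)=A$ and $M$ itself is extended.

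I expect the main obstacle to be precisely the denominator-clearing step in the graded setting: in the polynomial case one uses that $\Aut_{R[T]}(P)$ elements that are trivial at $T=0$ can be localized away, via Suslin's lemma on the structure of such automorphisms, and one must check that the only input needed — a Noetherian hypothesis plus the fact that $R_+$ is finitely generated and $R/R_+=A$ — survives when $R$ is merely positively graded rather than a polynomial ring. Concretely, one wants: if $\theta\in\Aut_{R_{uv}[T]}(M_{uv}[T])$ with $\theta|_{T=0}=\mathrm{id}$, then there is $k$ such that for all $s\equiv s'$ modulo a suitable power of the "patching ideal," the automorphisms over the two opens glue; establishing this uniformly is the crux, and it is where Lindel's argument genuinely generalizes Quillen's.
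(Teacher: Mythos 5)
The paper does not actually prove this statement: it is quoted verbatim from Lindel (\cite{lin}, Theorem 1.3) as a black box, so there is no in-paper argument to compare yours against. Judged on its own terms, your outline is the right one --- it is essentially Quillen's patching argument with the substitution $X\mapsto TX$ replaced by the homotopy $\gf_T\colon R\ra R[T]$, $r_i\mapsto T^ir_i$ for $r_i\in R_i$, which specializes to the augmentation $R\surj R_0=A$ at $T=0$ and to the identity at $T=1$; your reduction of the second assertion to the first is also correct, as is the observation that $J(A,M)$ is closed under multiplication by $A$.

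However, as a proof it has a genuine gap, and you name it yourself: the graded analogue of Quillen's splitting lemma is asserted to be the crux and is never established. Concretely, after normalizing the isomorphisms $\ga\colon M_u\iso N_u$ and $\gb\colon M_v\iso N_v$ (where $N=R\ot_A M/R_+M$) to be the identity modulo $R_+$, one must show that the discrepancy $\gm=\gb^{-1}\ga\in \Aut_{R_{uv}}(N_{uv})$, fed through $\gf_T$ to give $\gm(T)$ with $\gm(0)=\mathrm{id}$, factors as a product of automorphisms defined over $R_u$ and over $R_v$ after reparametrizations of the form $T\mapsto u^kT$, $T\mapsto v^kT$. In the polynomial case this is Quillen's Lemma~1 about the group of units of $\End(M)[T]$ congruent to $1$ mod $T$; in the graded case the relevant endomorphism object is no longer literally of the form $E[T]$, and verifying that the denominator-clearing estimate still holds (using only that $R$ is Noetherian and positively graded with $R/R_+=A$) is precisely the content of Lindel's theorem. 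Since you explicitly defer this step, what you have is an accurate roadmap of the known proof rather than a proof; everything else in your write-up (the reformulation of ``extended'' as $M\iso R\ot_A(M/R_+M)$, the reduction to the case where $u+v$ is a unit, and the final descent over the cover $D(u)\cup D(v)$ of $\Spec A_{u+v}$) is routine once that lemma is in hand.
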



{}

\end{document}